\documentclass{amsart}
\usepackage{geometry}
\usepackage{graphicx}
\usepackage{amsrefs}
\usepackage[colorlinks]{hyperref}
\usepackage{mathpazo}
\usepackage{kotex}
\usepackage{amssymb}

\newtheorem{theorem}{Theorem}[section]

\newtheorem{lemma}[theorem]{Lemma}
\newtheorem{proposition}[theorem]{Proposition}
\theoremstyle{definition}
\newtheorem{definition}[theorem]{Definition}

\numberwithin{equation}{section}

\usepackage{graphicx} 

\begin{document}
\title{The Characterization of Cheng's Multivariate Writhe Polynomial}
\author{Juno Park}
\address{Department of Mathematical Sciences, Korea Advanced Institute of Science and Technology (KAIST), Daejeon 34141, Republic of Korea}
\email{cohomology@kaist.ac.kr}
\subjclass[2020]{57K12, 57K14, 57K10}
\keywords{multi-virtual knot, multivariate writhe polynomial, characterization}
\begin{abstract}
Cheng \cite{4} introduces the multivariate writhe polynomial, leaving a question regarding the characterization of this invariant. In this paper, we resolve this question by proving that a Laurent polynomial $f$ with integer coefficients can be realized as the multivariate writhe polynomial of some multi-virtual knot if and only if $f(1,\dots,1)=0$ and $\left. \frac{d}{dt} f(t, \dots, t) \right\vert{}_{t=1} = 0$.
\end{abstract}
\maketitle
\section{Introduction}

The concept of virtual knots was first introduced by Kauffman \cite{9}, and their precise geometric interpretation has been studied in \cites{1,13}. Various polynomial invariants have been developed to distinguish virtual knots \cites{2,5,7,8,10,14}. In particular, there have been attempts to generalize the writhe polynomial (or affine index polynomial) to multivariable invariants \cites{4,8}. Specifically, the invariant in \cite{4} is defined for multi-virtual knots \cite{11}, and various studies on invariants for these objects are currently ongoing \cite{12}.

In this paper, we resolve the question raised in \cite{4}*{Section 4.2}. More precisely, we establish the following theorem, which characterizes Cheng's multivariate writhe polynomial.

\begin{theorem} \label{thm:mainA}
A polynomial $f(t_1, \cdots, t_k)\in\mathbb{Z}[t_1^{\pm1}, \cdots, t_k^{\pm1}]$ can be realized as the multivariate writhe polynomial of some multi-virtual knot if and only if $f(1, \cdots, 1)=0$ and $\left. \frac{d}{dt} f(t, \cdots, t) \right\vert{}_{t=1}=0$.
\end{theorem}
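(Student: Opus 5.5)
The plan has two parts: necessity, which should follow directly from the definition, and sufficiency, which I would get from an explicit construction plus an additivity argument. Throughout I assume Cheng's polynomial has the form
\[
W_K(t_1,\dots,t_k)=\sum_{c} w(c)\bigl(t_1^{n_1(c)}\cdots t_k^{n_k(c)}-1\bigr),
\]
where the sum runs over classical crossings, $w(c)$ is the writhe, and $n_i(c)$ is the index of $c$ with respect to the $i$-th type of virtual crossing. I also assume that $n_1(c)+\cdots+n_k(c)$ recovers the ordinary (single-variable) index of $c$ in the underlying virtual knot, obtained by forgetting the types. Both assumptions need to be checked against the precise conventions of \cite{4}.

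\textbf{Necessity.} The condition $f(1,\dots,1)=0$ is immediate, since each summand vanishes at $(1,\dots,1)$. For the derivative condition, set $t_i=t$ to get $\sum_c w(c)\bigl(t^{\sum_i n_i(c)}-1\bigr)$. Differentiating at $t=1$ gives $\sum_c w(c)\,\mathrm{Ind}(c)$, which is zero by the standard fact for the affine index (writhe) polynomial. I would reprove that fact in a line: count arcs of the Gauss diagram crossing each chord with signs, and observe that each pair of chords contributes twice with opposite signs.

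\textbf{Sufficiency, algebraic reduction.} Let $L$ be the set of Laurent polynomials satisfying both conditions. Write $t^{a}=t_1^{a_1}\cdots t_k^{a_k}$ and $|a|=a_1+\cdots+a_k$. Every $f$ with $f(1,\dots,1)=0$ can be written uniquely as $\sum_{a\neq 0} c_a(t^a-1)$, and the second condition then reads $\sum_a c_a|a|=0$. I claim $L$ is generated as an abelian group by:
\begin{itemize}
\item[(i)] $t^a-1$ with $|a|=0$;
\item[(ii)] $(t^a-1)+(t^b-1)-(t^{a+b}-1)$ for arbitrary $a,b$.
\end{itemize}
To see this, use the type (ii) relations to rewrite each $t^a-1$, modulo the span, as $\sum_i a_i(t^{e_i}-1)$. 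The elements $(t^{e_i}-1)-(t^{e_j}-1)$ lie in the span, since each is congruent to $t^{e_i-e_j}-1$, which is of type (i). So $f$ reduces to $\bigl(\sum_a c_a|a|\bigr)(t^{e_1}-1)$, which is $0$.

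Next I would show that the set of realizable polynomials is a subgroup.
\begin{itemize}
\item Closure under sums: connected sum of multi-virtual knots, performed at a point away from all crossings, should leave every index vector unchanged, because indices are computed from arcs of the Gauss diagram relative to a single chord. So $W$ is additive.
\item Negatives: take the mirror image, i.e.\ switch all classical crossings. This negates the writhes, and I need to check whether it preserves or negates the index vectors. If it negates them, I compose with reversing orientation to fix the exponents.
\end{itemize}

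\textbf{Sufficiency, realizing the generators (main obstacle).} It remains to realize each generator by an explicit multi-virtual knot, which I expect to be the hardest step. I would work with multi-virtual Gauss diagrams: chords carry signs, and virtual crossings of type $i$ are recorded as points on the circle that shift the $i$-th index of the chords passing over them. For a type (i) generator I would take one chord with prescribed index vector $a$ and one chord with index vector $0$, arranged so that the total signed count is consistent. For a type (ii) generator I would take three chords with index vectors $a$, $b$, $a+b$ and writhes $+,+,-$, modelled on the standard realizations of the affine index polynomial in which two interlaced chords each pick up a contribution. I would first try to place $|a_i|$ and $|b_i|$ virtual crossings of type $i$ on suitable arcs and then compute the three indices directly. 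If a direct placement fails, I would add canceling pairs of chords, which contribute terms of the form $t^{c}-1-(t^{c}-1)$, to adjust the construction. The delicate point is to ensure that no unwanted chords are introduced, or that any extra ones have index vector $0$.
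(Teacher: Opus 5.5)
Your necessity argument is fine. Your algebraic reduction is correct: $L$ is generated by (i) and (ii), and your argument via $[t^{a+b}-1]=[t^a-1]+[t^b-1]$ works. Closure of the realizable set under sums and negatives is also fine; it comes from diagrammatic connected sum and $m(r(\cdot))$, which by Proposition~\ref{proposition2.2} negates $W$. Type (i) generators also cause no trouble: take a single classical crossing whose two smoothing lobes meet at virtual crossings of suitably chosen types.

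\textbf{The gap.} The genuine gap is the realization of the type (ii) generators. This is the entire geometric content of the theorem, and the construction you sketch cannot work in general. By the identity you use for necessity, $n_1(c)+\cdots+n_k(c)$ is the ordinary index of $c$ in the underlying virtual knot. That index is a signed count of the classical chords interlacing $c$, so in a diagram with three classical crossings it is at most $2$ in absolute value.

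Hence three chords with index vectors $a$, $b$, $a+b$ can exist only when $|a|$, $|b|$ and $|a+b|$ (your notation, possibly negative) all lie in $\{-2,\dots,2\}$. Type (ii) generators of that restricted kind, together with type (i), only span polynomials supported on monomials $t^a$ with $|a|\in\{-2,\dots,2\}$. They miss, for instance, $t_1^3-3t_1+2\in L$. So any correct construction must contain chords of arbitrarily large ordinary index, hence arbitrarily many interlacing classical crossings. It must also be exhibited as an actual planar multi-virtual diagram. Recording a virtual crossing as a single ``point on the circle'' is not a faithful model, and not every such chord data is planar. Saying you would add canceling pairs if direct placement fails names the difficulty but does not resolve it.

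\textbf{What the paper does instead.} It supplies what is missing through two ideas your plan lacks.
\begin{itemize}
\item Relabeling virtual crossing types substitutes variables in $W$. Combined with connected sums and $m(r(\cdot))$, and using generators $t^e-|e|t_1+(|e|-1)$ instead of your (i) and (ii), this reduces everything to one family with distinct variables and exponents $\pm1$: $t_1\cdots t_n t_{n+1}^{-1}\cdots t_{n+s}^{-1}-(n-s)t_1+(n-s-1)$.
\item It exhibits an explicit $(n+1)$-crossing diagram $K_{n,0}$ with $W=t_1\cdots t_n-nt_1+(n-1)$. It creates the negative exponents by converting classical crossings $1,\dots,s$ into virtual crossings of fresh types $\alpha_{n+1},\dots,\alpha_{n+s}$. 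The case $s>n$ is handled via $r(K_{s,n})$, connected sums with the auxiliary knots $T$ and $E$, and relabeling.
\end{itemize}
Without an explicit family of this kind, your argument does not establish sufficiency.
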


As noted in \cite{4}*{Section 4.2}, this can be viewed as an analogue to the following proposition for the writhe polynomial. 

\begin{theorem}[\cites{6,14}]\label{thm1.1}
A polynomial $f(t)\in\mathbb{Z}[t, t^{-1}]$ can be realized as the writhe polynomial of some virtual knot if and only if $f(1)=0$ and $f'(1)=0$.
\end{theorem}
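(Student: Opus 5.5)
The plan is to prove necessity by a direct computation on Gauss diagrams, and sufficiency by realizing a spanning set of building blocks and combining them with connected sum. Throughout, the writhe polynomial is written in the normalization
\[
W_K(t)=\sum_{c}\varepsilon(c)\bigl(t^{\mathrm{ind}(c)}-1\bigr),
\]
where $c$ runs over the classical crossings, $\varepsilon(c)$ is the sign of $c$, and $\mathrm{ind}(c)\in\mathbb{Z}$ is the index of $c$. Concretely, $\mathrm{ind}(c)$ is the signed count of chord endpoints of the Gauss diagram lying on one of the two arcs into which the chord of $c$ cuts the circle.

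\textbf{Necessity.} The condition $f(1)=0$ holds immediately, since each summand vanishes at $t=1$. For the derivative we get $W_K'(1)=\sum_c \varepsilon(c)\,\mathrm{ind}(c)$. Expanding $\mathrm{ind}(c)$ as a sum over the chords $d$ that interlace $c$, with signs determined by $\varepsilon(d)$ and the direction in which $d$ crosses $c$, turns this into a double sum over ordered pairs of interlaced chords $(c,d)$. The contribution of $(c,d)$ is antisymmetric under swapping $c$ and $d$. So the double sum cancels in pairs and $W_K'(1)=0$. The only care needed here is to fix the orientation conventions so that the antisymmetry is visible.

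\textbf{Sufficiency, reduction step.} The set $V=\{f: f(1)=0,\ f'(1)=0\}$ is a subgroup of $\mathbb{Z}[t^{\pm1}]$. For $n\neq 0,1$ put
\[
g_n(t)=t^n-nt+(n-1).
\]
Each $g_n$ lies in $V$, and together they generate $V$ over $\mathbb{Z}$. Indeed, given $f=\sum a_n t^n\in V$, the difference $f-\sum_{n\neq 0,1}a_n g_n$ lies in $V$ and is supported on $\{1,t\}$. A polynomial $\alpha+\beta t$ in $V$ satisfies $\alpha+\beta=0$ and $\beta=0$, so the difference is zero. The writhe polynomial is additive under connected sum of virtual knots, for any choice of connection points, because the indices of crossings are unchanged. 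It therefore suffices to realize $g_n$ and $-g_n$ for every $n\neq 0,1$.

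\textbf{Sufficiency, realizing $\pm g_n$.} Taking the mirror image negates all signs and sends $W_K$ to $-W_K$. It may also send $\mathrm{ind}$ to $-\mathrm{ind}$ depending on conventions; in that case compose with orientation reversal, so that exactly $W\mapsto -W$ is achieved. It remains to realize each $g_n$. Every Gauss diagram is realized by a virtual knot, so the task is to exhibit an explicit diagram. For $n\ge 2$ I would take:
\begin{itemize}
\item one chord $c$;
\item $n$ mutually parallel chords $d_1,\dots,d_n$, each crossing $c$, all with the same sign and direction, chosen so that $\mathrm{ind}(c)=n$.
\end{itemize}
Each $d_i$ then meets only $c$, so it has index $\pm1$. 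The crude version gives $\varepsilon(c)(t^n-1)\pm n(t^{\pm1}-1)$. The signs must be tuned so that each $d_i$ contributes $-(t-1)$ and $c$ contributes $+(t^n-1)$. By the necessity argument this sum is automatically $g_n$ once it is consistent. For $n\le -1$ use the reflected configuration, or add extra chords of index $0$ (which contribute nothing) to repair signs.

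\textbf{Main obstacle.} I expect the hard part to be the sign and direction bookkeeping in this last step. Flipping the direction of $d_i$ flips both its contribution to $\mathrm{ind}(c)$ and its own index, while $\varepsilon(d_i)$ can be chosen independently. The first thing I would try is to check, with a single chord $d$, the four possible sign and direction choices, and pick the one giving $\mathrm{ind}(c)$ and $\mathrm{ind}(d)=1$ with $\varepsilon(d)=-\varepsilon(c)$. If that fails, I would fall back on the known constructions in \cites{6,14}.
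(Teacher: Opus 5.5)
Your argument is correct. The paper only quotes this statement from the literature, but your proof is exactly the one-variable version of the paper's own proof of Theorem~\ref{thm:mainA}: the same generators $t^n-nt+(n-1)$, additivity of $W$ under connected sum, $m\circ r$ for negation, and a realizing diagram $K_{n,0}$ with the same crossing data as yours (one positive crossing of index $n$ and $n$ negative crossings of index $1$).

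The sign bookkeeping you left open closes uniformly in $n$. By your antisymmetry, if $\varepsilon(c)=+1$ and every $d_i$ crosses $c$ in the direction making $\mathrm{ind}(d_i)=1$, then $\mathrm{ind}(c)=-\sum_i\varepsilon(d_i)$. So $|n|$ chords of sign $-\operatorname{sgn}(n)$ give $g_n$ for every $n\neq0$. For $n\le-1$ this means giving the $d_i$ the same sign as $c$. Merely reversing orientation would instead produce $g_{|n|}(t^{-1})=g_{-|n|}-|n|\,g_{-1}$, which is the discrepancy the copies of $T$ repair in the paper's case $s>n$.
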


\section{The multivariate writhe polynomial}

Recall the definition of the multivariate writhe polynomial.

\begin{definition}[\cite{4}]
Let $D$ be a multi-virtual knot diagram. The \emph{multivariate writhe polynomial} of $D$ is defined by
\[W_D(t_1, \cdots, t_k)=\sum\limits_{c}w(c)\prod\limits_{i=1}^kt_i^{\operatorname{Ind}_{\alpha_i}(c)}-w(D).\]
\end{definition}

The details of this notation can be found in \cite{4}.

For a multi-virtual knot $K$, let $r(K)$ and $m(K)$ denote the multi-virtual knots obtained from $K$ by reversing its orientation and by switching all classical crossings, respectively.

\begin{proposition}[\cite{4}]\label{proposition2.2}
Let $K$ be a multi-virtual knot. Then we have 
\begin{center}
$W_{m(K)}(t_1, \cdots, t_k)=-W_K(t_1^{-1}, \cdots, t_k^{-1})$ and $W_{r(K)}(t_1, \cdots, t_k)=W_K(t_1^{-1}, \cdots, t_k^{-1})$.
\end{center}
\end{proposition}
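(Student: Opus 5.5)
We need to prove Proposition 2.2: mirror and reverse formulas. Plan: work crossing by crossing.

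The plan is to compare a diagram $D$ of $K$ with the diagrams $m(D)$ and $r(D)$ obtained from it, crossing by crossing. Since the multivariate writhe polynomial is an invariant, it suffices to compute both sides on these particular diagrams. Each classical crossing $c$ of $D$ has a corresponding crossing $c'$ in $m(D)$ and in $r(D)$. We track two pieces of data for $c'$: the sign $w(c')$ and the tuple of indices $\operatorname{Ind}_{\alpha_i}(c')$, $i=1,\dots,k$. Here $\operatorname{Ind}_{\alpha_i}$ is defined in \cite{4} from the arcs and crossing data along the Gauss diagram, with respect to the $i$-th type of virtual crossing.

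\textbf{Mirror $m(D)$.} Switching a classical crossing changes its sign, so $w(c')=-w(c)$ and hence $w(m(D))=-w(D)$. We then check from Cheng's definition that switching every classical crossing negates each index. The mechanism should be the same as for the affine index in the one-variable case. There, the index of $c$ is a signed count of the crossings met when traveling along one of the loops determined by $c$, taken with respect to over/under data. Switching all crossings reverses all the over/under information, and with it the direction of every chord. So each contributing term flips sign, and $\operatorname{Ind}_{\alpha_i}(c')=-\operatorname{Ind}_{\alpha_i}(c)$ for every $i$. Substituting gives
\[W_{m(D)}=\sum_c\bigl(-w(c)\bigr)\prod_i t_i^{-\operatorname{Ind}_{\alpha_i}(c)}+w(D)=-W_D(t_1^{-1},\dots,t_k^{-1}).\]

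\textbf{Reverse $r(D)$.} Reversing the orientation of both strands at a crossing leaves its sign unchanged, so $w(c')=w(c)$ and $w(r(D))=w(D)$. For the indices, reversing the orientation turns each chord into the same chord traversed in the opposite direction, which again negates every signed contribution. Equivalently, the loop used to compute the index at $c$ becomes the complementary loop traversed backwards. Since the total contribution around the whole knot vanishes, the index changes sign. Hence $\operatorname{Ind}_{\alpha_i}(c')=-\operatorname{Ind}_{\alpha_i}(c)$, and we obtain $W_{r(D)}=W_D(t_1^{-1},\dots,t_k^{-1})$.

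The main obstacle is verifying the index identities directly from Cheng's definition of $\operatorname{Ind}_{\alpha_i}$, which involves the multi-virtual crossings of type $\alpha_i$. The point to confirm is that the $i$-th index is determined by the classical crossing data together with the positions of the type-$\alpha_i$ virtual crossings. Mirroring does not alter the virtual crossings, and reversal only changes the direction in which they are encountered. I would first express $\operatorname{Ind}_{\alpha_i}(c)$ as a sum of local signed contributions along an arc, and then show that each local contribution is negated under both operations. The one subtlety is that for reversal the arc must be replaced by its complement, and this step needs the vanishing of the total sum around the knot.
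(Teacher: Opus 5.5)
The paper gives no proof of this statement; it quotes it from Cheng \cite{4}, so your argument has to stand on its own. Your bookkeeping is fine: $w(c')=-w(c)$ under $m$, $w(c')=w(c)$ under $r$, and once $\operatorname{Ind}_{\alpha_i}(c')=-\operatorname{Ind}_{\alpha_i}(c)$ is known for every $i$, both formulas follow by substitution. But that index identity is the whole content of the proposition, and you do not prove it. You defer it to ``checking Cheng's definition'' and support it with a mechanism that does not work as stated.

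For $m$, your one-variable heuristic (classical chords reverse and change sign, so every term flips) cannot be what governs $\operatorname{Ind}_{\alpha_i}$. The type-$\alpha_i$ index has to be read off from the virtual crossings of type $\alpha_i$. Indeed, the relabeling step in the proof of Lemma~\ref{lemma} changes $W$ without touching any classical data, and in Section 3 turning a classical crossing into an $\alpha_{n+1}$-crossing produces a factor $t_{n+1}^{-1}$. Mirroring leaves every virtual crossing and its local planar picture unchanged. For $r$, your claim that reversal ``negates every signed contribution'' contradicts your own correct remark that reversing both strands at a crossing preserves its sign. The same holds for the planar sign of a virtual crossing.

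In both cases the only thing that changes is the reference half at $c$. The two halves at $c$ are the loops of the oriented smoothing, and in the standard convention they are distinguished by the over/under data at $c$. Switching $c$ makes the complementary half the reference one. Reversing the orientation makes the complementary half, traversed backwards, the reference one.

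So the lemma you need is this: for each type $\alpha_i$ separately, the signed count of type-$\alpha_i$ virtual crossings met along one half equals minus the count along the other half. It holds because every type-$\alpha_i$ virtual crossing is passed twice with opposite planar signs. Hence the total around the knot vanishes type by type, and a crossing of a half with itself contributes $+1$ and $-1$. With that lemma, each operation produces exactly one sign change.

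Your closing plan, by contrast, both negates every local contribution and passes to the complementary arc via the vanishing total. Taken literally these are two sign changes that cancel, so the plan as written would not give the result. In any case, it still has to be checked against the precise definition of $\operatorname{Ind}_{\alpha_i}$ in \cite{4}.
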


\section{The Characterization of the multivariate writhe polynomial}

Now let us prove Theorem~\ref{thm:mainA}.

\begin{lemma}\label{lemma}
    For each positive integer $n$ and $s$, if there exist a multi-virtual knot $K_{n,s}$ satisfying
    \\ $W_{K_{n,s}}(t_1, \cdots, t_{n+s})=t_1 \cdots t_n t_{n+1}^{-1}\cdots t_{n+s}^{-1} - (n-s)t_1 +(n-s-1)$, then the Theorem \ref{thm:mainA} is true.
\end{lemma}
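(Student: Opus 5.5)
The plan is to prove the two directions separately and to reduce the sufficiency direction to the knots $K_{n,s}$ by three operations: relabeling virtual crossing types, negation through Proposition~\ref{proposition2.2}, and connected sum.

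\emph{Necessity.} I will evaluate the definition directly. At $t_i=1$ we get $W_D(1,\dots,1)=\sum_c w(c)-w(D)=0$. For the derivative condition, restrict to the diagonal: $W_D(t,\dots,t)=\sum_c w(c)\,t^{\sum_i \operatorname{Ind}_{\alpha_i}(c)}-w(D)$. I expect that $\sum_i \operatorname{Ind}_{\alpha_i}(c)$ equals the ordinary index of $c$ in the virtual knot obtained by forgetting the types of the virtual crossings; this needs checking against Cheng's definition. Granting it, $W_D(t,\dots,t)$ is the writhe polynomial of that virtual knot, so Theorem~\ref{thm1.1} gives vanishing derivative at $t=1$.

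\emph{Sufficiency, algebraic reduction.} For $a\in\mathbb{Z}^k$ write $t^a=\prod t_i^{a_i}$ and $|a|=\sum a_i$. Since $f(1,\dots,1)=0$, write $f=\sum_a c_a(t^a-1)$. Set
\[g_a=t^a-|a|\,t_1+(|a|-1).\]
Then $f=\sum_a c_a g_a+\bigl(\sum_a c_a|a|\bigr)(t_1-1)$. The last coefficient equals $\left.\frac{d}{dt}f(t,\dots,t)\right|_{t=1}$, which is $0$, so $f=\sum_a c_a g_a$. It therefore suffices to show three things:
\begin{enumerate}
\item each $g_a$ is realizable;
\item negatives of realizable polynomials are realizable;
\item sums of realizable polynomials are realizable.
\end{enumerate}
For negatives, Proposition~\ref{proposition2.2} gives $W_{r(m(K))}(t)=-W_K(t)$. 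For sums, I will use the connected sum. Its writhe polynomial should be additive, because indices and writhe are computed crossingwise and the constant terms add; this must be checked in the multi-virtual setting, including that the result does not depend on where the sum is taken.

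\emph{Realizing $g_a$.} Let $n=\sum_{a_i>0}a_i$ and $s=\sum_{a_i<0}|a_i|$, and start from $K_{n+1,s+1}$. Relabel its virtual crossing types by a map $\varphi$ from its type set to the target variables. I will send its first positive variable and one extra negative variable both to the target $t_1$. The remaining $n$ positive variables go to target variables with multiplicity $a_i$ for each $a_i>0$, and the remaining $s$ negative variables go to target variables with multiplicity $|a_i|$ for each $a_i<0$.

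Identifying types substitutes $t_j\mapsto t_{\varphi(j)}$ in the indices, so the polynomial becomes $t^a-(n-s)t_1+(n-s-1)=g_a$, since the two copies of $t_1$ cancel in the monomial and $|a|=n-s$. The case $a=0$ gives $g_0=0$, realized by the unknot.

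The step I expect to be the main obstacle is justifying that identifying several virtual crossing types (a non-injective relabeling) yields a valid multi-virtual knot. One must also check that it acts on $W$ by exactly this substitution, with $\operatorname{Ind}_{\alpha_{\varphi}}=\sum_{\varphi(j)=\cdot}\operatorname{Ind}_{\alpha_j}$. I would try to verify this first from Cheng's definition of the indices.
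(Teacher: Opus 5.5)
Your proposal is correct and follows essentially the paper's proof. For necessity, you reduce the diagonal $W_K(t,\dots,t)$ to an ordinary writhe polynomial and apply Theorem~\ref{thm1.1}; the paper gets this reduction from Cheng's Proposition~3.5. For sufficiency, you use the same generators (your $g_a$ are the paper's set $S$), realized by the same relabeling of $K_{n+1,s+1}$ that sends the first positive variable and one extra negative variable to $t_1$. You then close under negation via $r\circ m$ and under sums via connected sum, exactly as the paper does.

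Your identity $f=\sum_a c_a g_a$, which uses that $\sum_a c_a|a|$ is the diagonal derivative at $t=1$, makes explicit the spanning claim that the paper only asserts. The points you flag (non-injective relabeling, and additivity of $W$ under some diagrammatic connected sum) are treated as routine in the paper too. You do not need independence from where the connected sum is taken; it is enough that one diagram realizes the sum.
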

\begin{proof}
The necessity part of Theorem \ref{thm:mainA} follows from \cite{4}*{Proposition 3.5} and Theorem \ref{thm1.1}.

A generating set for $\{f \in \mathbb{Z}[t_1^{\pm 1}, \dots, t_i^{\pm 1},\dots]: f(1, \dots, 1,\dots)=0,\left. \frac{d}{dt} f(t, \dots, t,\dots) \right\vert{}_{t=1}=0\}$ is given by $S=\{t_1^{e_1}\dots t_m^{e_m}-et_1 + (e-1) : (e_1,\dots , e_m)\in \mathbb{Z}^m, e=\sum_{i=1}^m e_i , m=1,2,3,\dots\}$. Let $g(t_1,\dots,t_m)=t_1^{e_1}\dots t_m^{e_m}-et_1 + (e-1)$, $n=1+\sum_{e_i>0} |e_i|$, and $s=1+\sum_{e_i<0} |e_i|$. We consider the knot $K_{n,s}$. By suitably relabeling the virtual crossings of $K_{n,s}$ (which were originally labeled $\alpha_1, \alpha_2, \dots, \alpha_{n+s}$) using the labels $\alpha_1, \dots, \alpha_m$, we can construct a new multi-virtual knot $K$ satisfying $W_K(t_1,\dots, t_m)=g(t_1,\dots,t_m)$. (The idea of this relabeling is to effectively substitute $t_1, t_{i_1}, \dots, t_{i_{n-1}}, t_1, t_{j_1}, \dots, t_{j_{s-1}}$ in order into the variables of $W_{K_{n,s}}(t_1, \dots, t_{n+s})$, for $t_{i_*}, t_{j_*} \in \{t_1, \dots, t_m\}$ such that $t_1^{e_1}\dots t_m^{e_m} = t_1(t_{i_1}\cdots t_{i_{n-1}})t_1^{-1}(t_{j_1}^{-1}\cdots t_{j_{s-1}}^{-1})$.)

One can easily check that a (diagrammatic) connected sum of two multi-virtual knot diagrams adds their $W$. Therefore, by using the connected sum and the operation $m(r(\cdot))$, we can find a multi-virtual knot corresponded  to any element in $\mathrm{span}(S)$ by $W$.
\end{proof}

\begin{proof}[Proof of Theorem~\ref{thm:mainA}]
    By Lemma~\ref{lemma}, we only need to find $K_{n,s}$ for each $n$ and $s$.
    
    First, Figure~\ref{figure1} shows a diagram of $K_{n,0}$. (We can still use the definition of $K_{n,s}$ from Lemma~\ref{lemma} even when $s=0$. Figure~\ref{figure1} is valid for $n \geq 1$.) Its Gauss diagram is shown in Figure~\ref{figure2}, and by calculation, we obtain $W_{K_{n,0}}(t_1, \dots, t_n) = t_1 \dots t_n - nt_1 + (n-1)$.

    There are classical crossings from the 0-th to the $n$-th in Figure~\ref{figure1}. If we define $K_{n,1}$ by changing the 1st crossing into a virtual crossing with a new label $\alpha_{n+1}$ (see Figure~\ref{figure3}), then the Gauss diagram of $K_{n,1}$ is given in Figure~\ref{figure4}. Therefore, $W_{K_{n,1}} = t_1 \dots t_n t_{n+1}^{-1} - (n-1)t_1 + (n-2)$.

    For $n \geq1$ and $1\leq s \leq n$, if we define $K_{n,s}$ by changing the 1st to the $s$-th classical crossings into virtual crossings with labels $\alpha_{n+1}, \dots, \alpha_{n+s}$, then we obtain $W_{K_{n,s}}(t_1, \dots, t_{n+s}) = t_1 \dots t_n t_{n+1}^{-1} \dots t_{n+s}^{-1} - (n-s)t_1 + (n-s-1)$.

    If $s > n \geq 1$, we first consider $r(K_{s,n})$. Its $W$ is $t_1^{-1} \dots t_s^{-1} t_{s+1}\dots t_{s+n} - (s-n)t_1^{-1} + (s-n-1)$. Since $T$ in Figure~\ref{figure5} and $E$ in Figure~\ref{figure6} have $W$ as $t_1+t_1^{-1}-2$ and $t_{s+1}-t_{1}$ respectively, adding $(s-n)$ copies of $T \sharp E$ to $r(K_{s,n})$ via connected sum gives a knot whose $W$ is $t_1^{-1} \dots t_s^{-1} t_{s+1}\dots t_{s+n} - (n-s)t_{s+1} + (n-s-1)$. Now, in this large multi-virtual knot, we relabel $\alpha_1, \dots, \alpha_s, \alpha_{s+1}, \dots, \alpha_{s+n}$ as $\alpha_{n+1}, \dots, \alpha_{n+s}, \alpha_1, \dots, \alpha_n$, respectively. Let $K_{n,s}$ be the resulting knot. Then $K_{n,s}$ satisfies the defining formula of Lemma~\ref{lemma}.
\end{proof}

\begin{figure}[h]
\centering
\includegraphics[width=13.5cm]{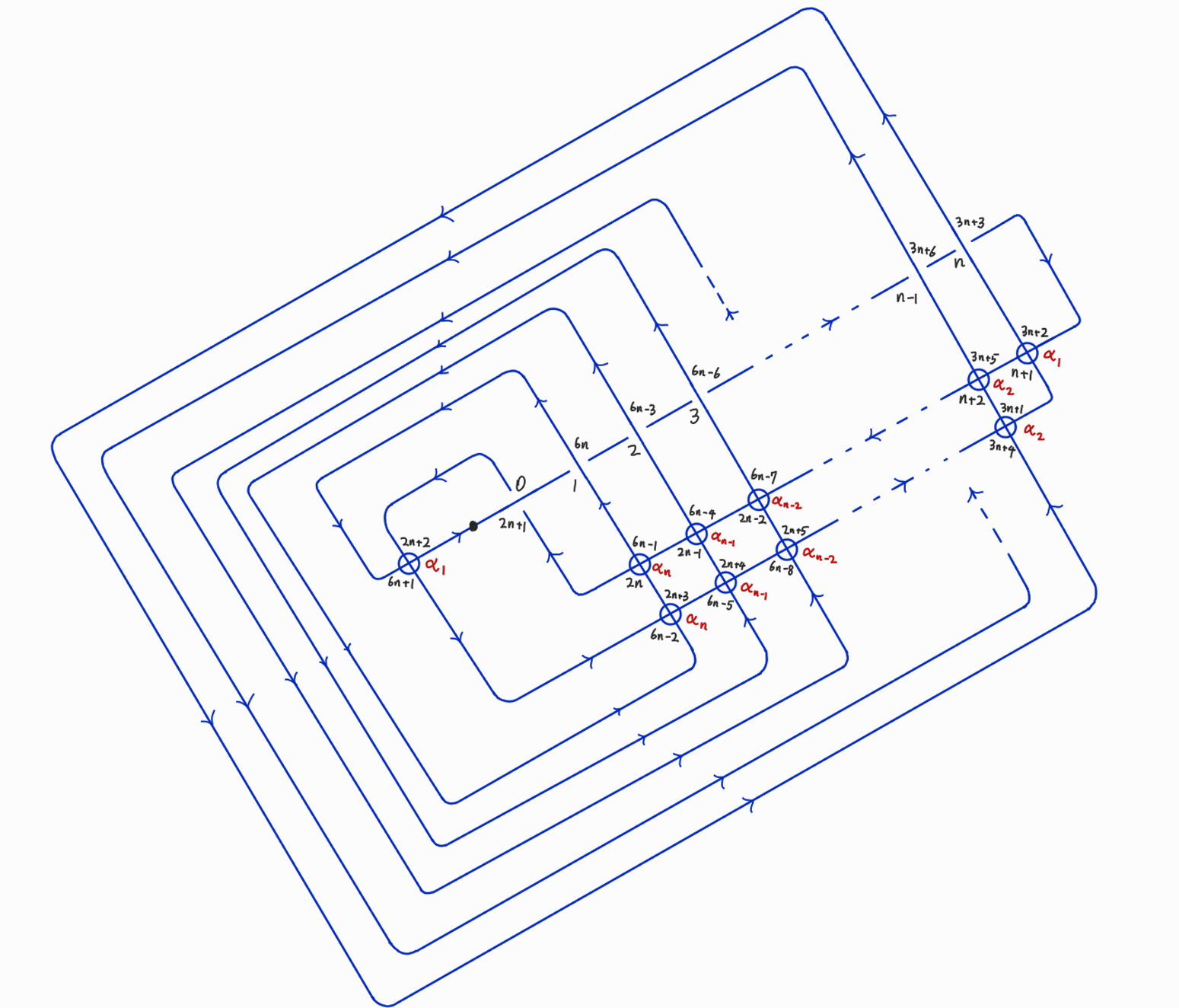}
\caption{Multi-virtual knot diagram of $K_{n,0}$}\label{figure1}
\end{figure}
\begin{figure}[h]
\centering
\includegraphics[width=7.5cm]{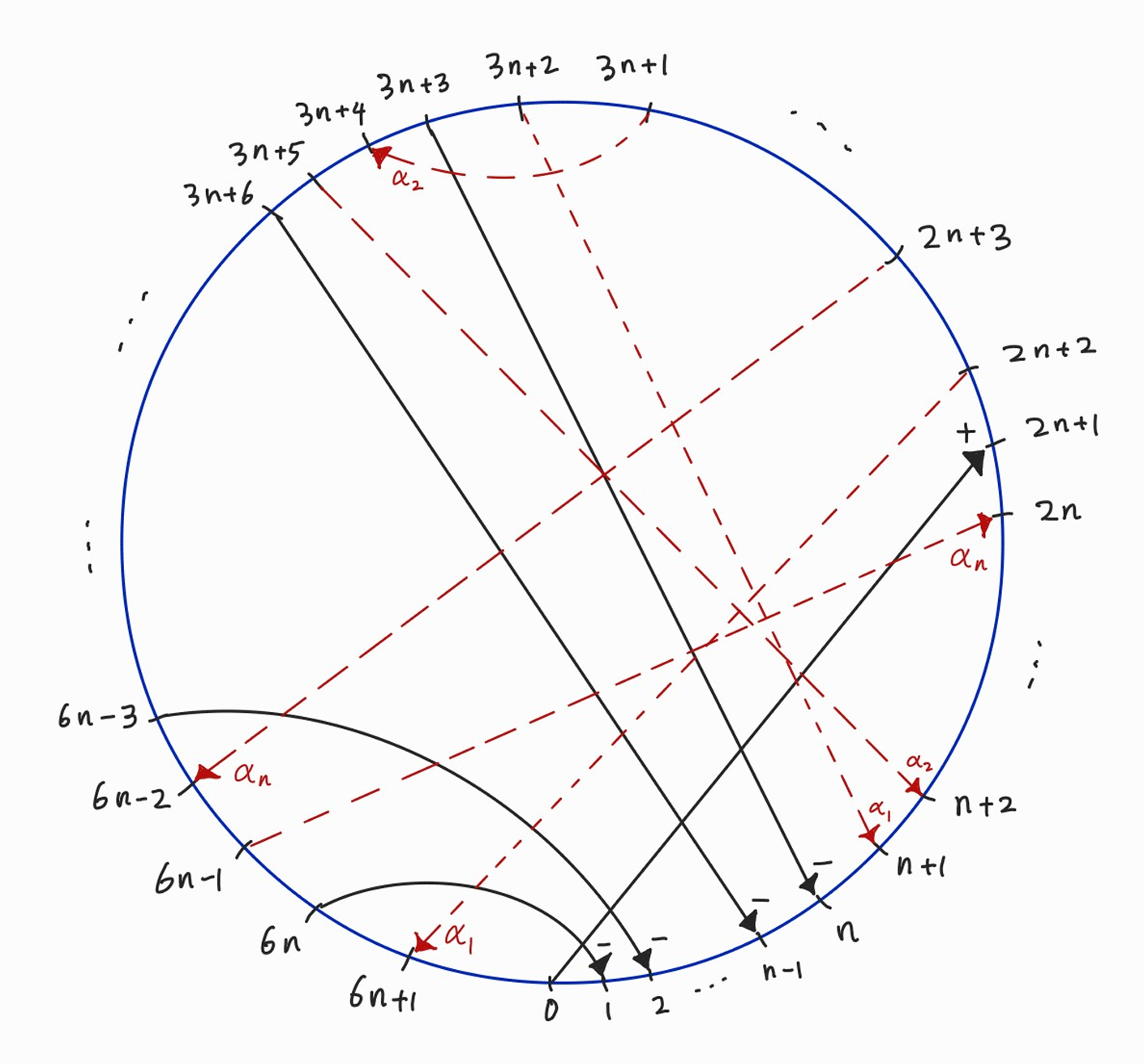}
\caption{Gauss diagram of $K_{n,0}$}\label{figure2}
\end{figure}

\clearpage

\begin{figure}[h]
\centering
\includegraphics[width=12.1cm]{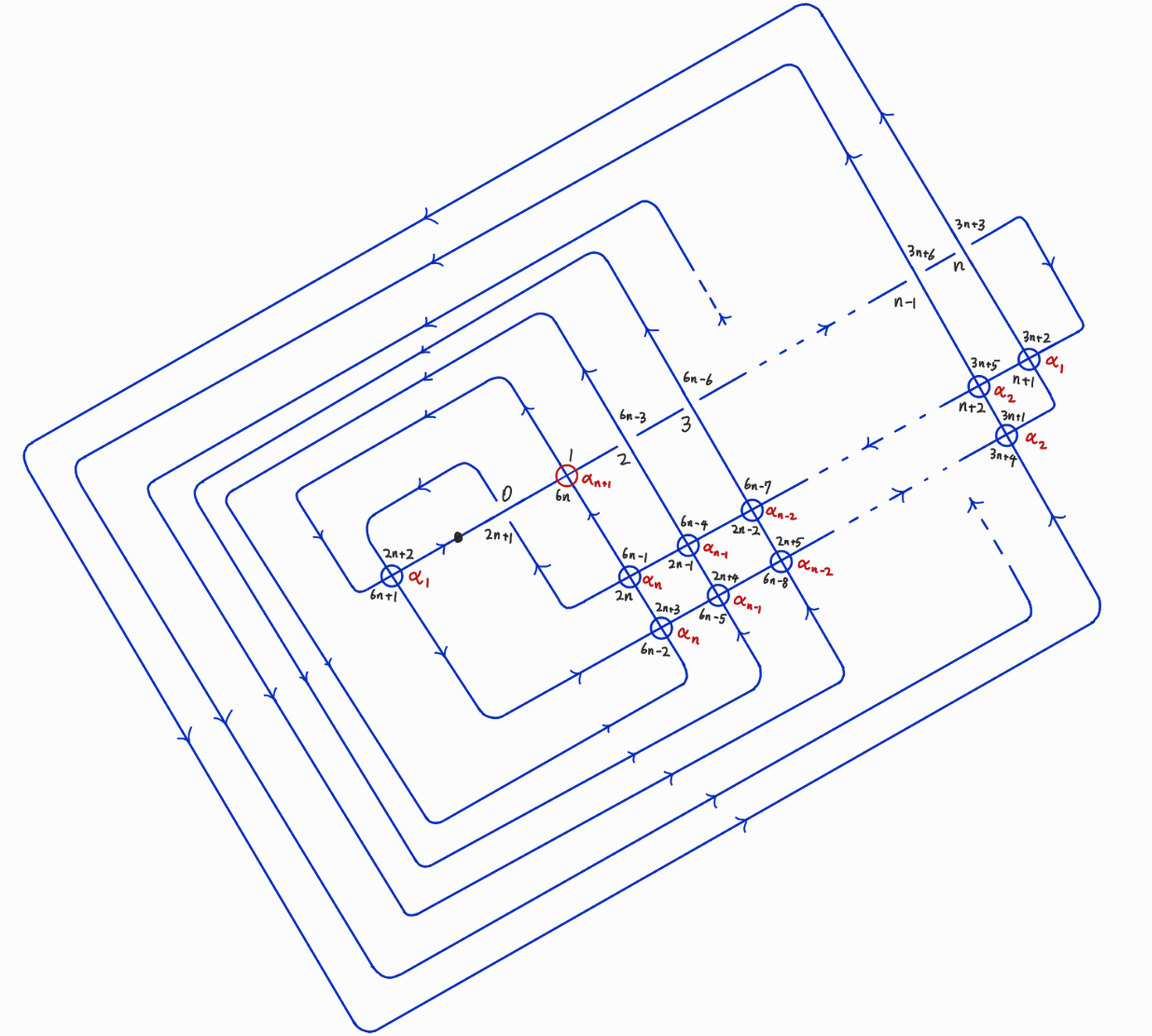}
\caption{Multi-virtual knot diagram of $K_{n,1}$}\label{figure3}
\end{figure}
\begin{figure}[h]
\centering
\includegraphics[width=7cm]{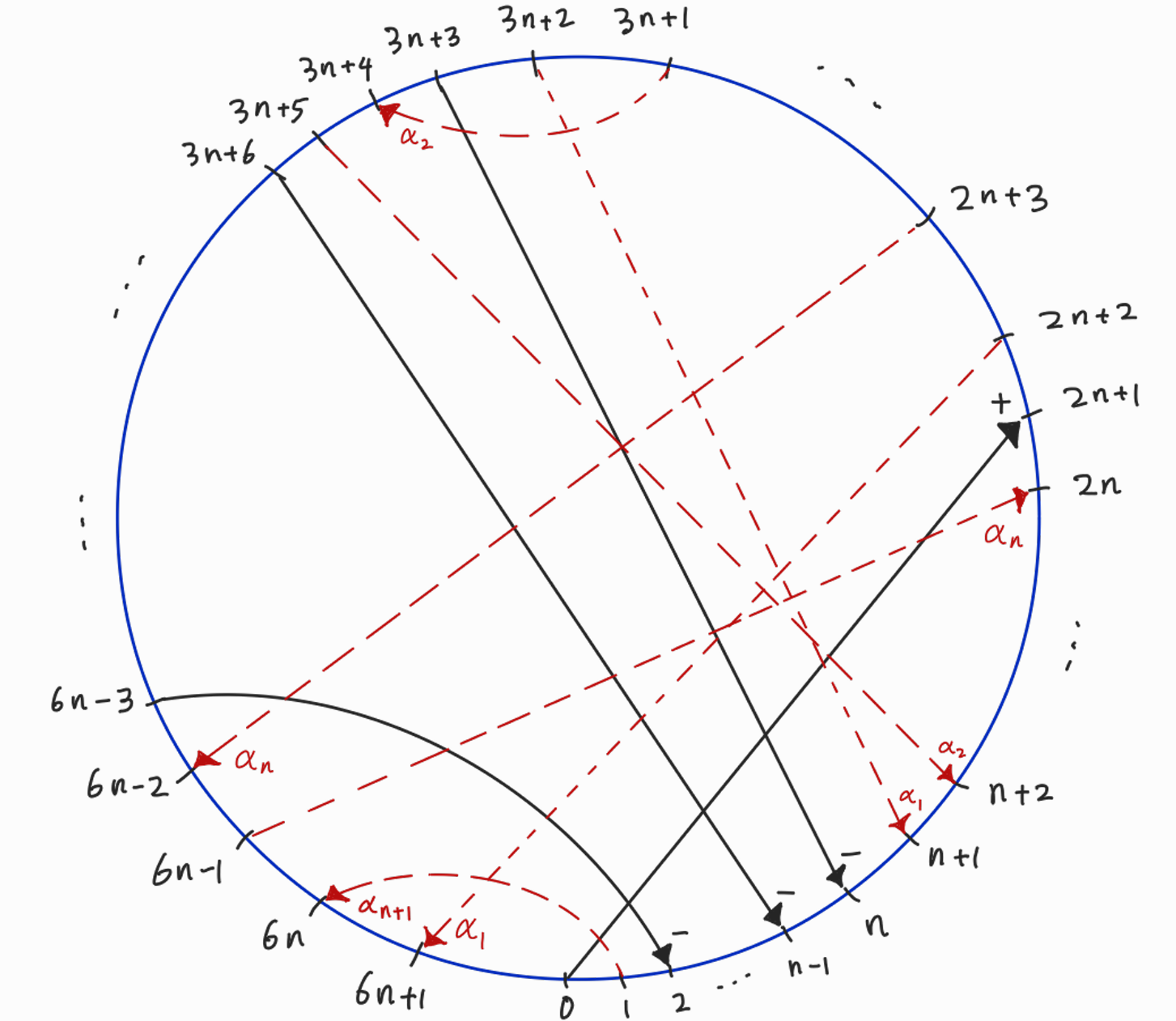}
\caption{Gauss diagram of $K_{n,1}$}\label{figure4}
\end{figure}

\clearpage

\begin{figure}[h]
  \centering
  \begin{minipage}[b]{0.48\textwidth}
    \centering
    \includegraphics[width=5cm]{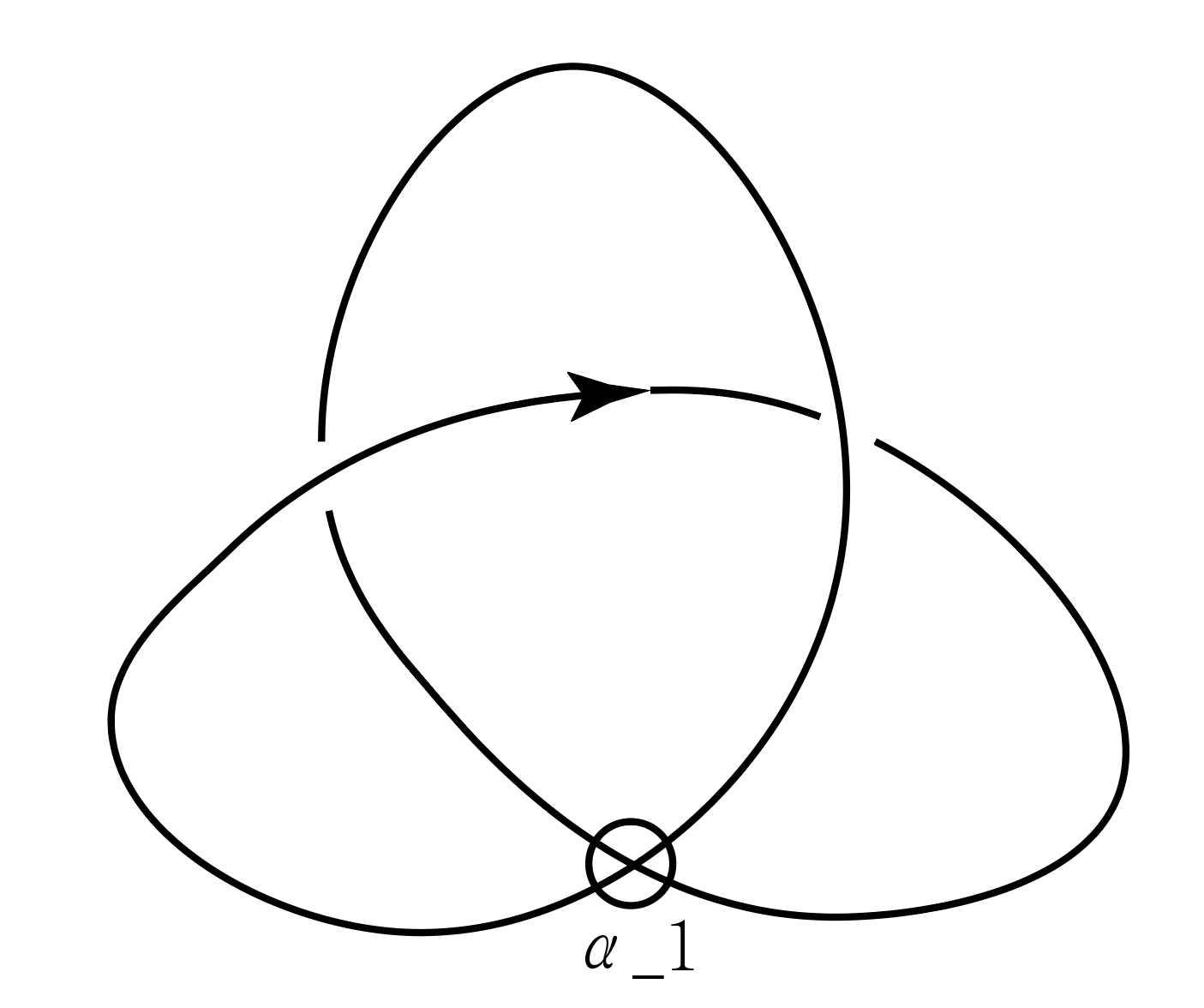}
    \caption{Diagram of $T$}\label{figure5}
  \end{minipage}\hfill
  \begin{minipage}[b]{0.48\textwidth}
    \centering
    \includegraphics[width=7cm]{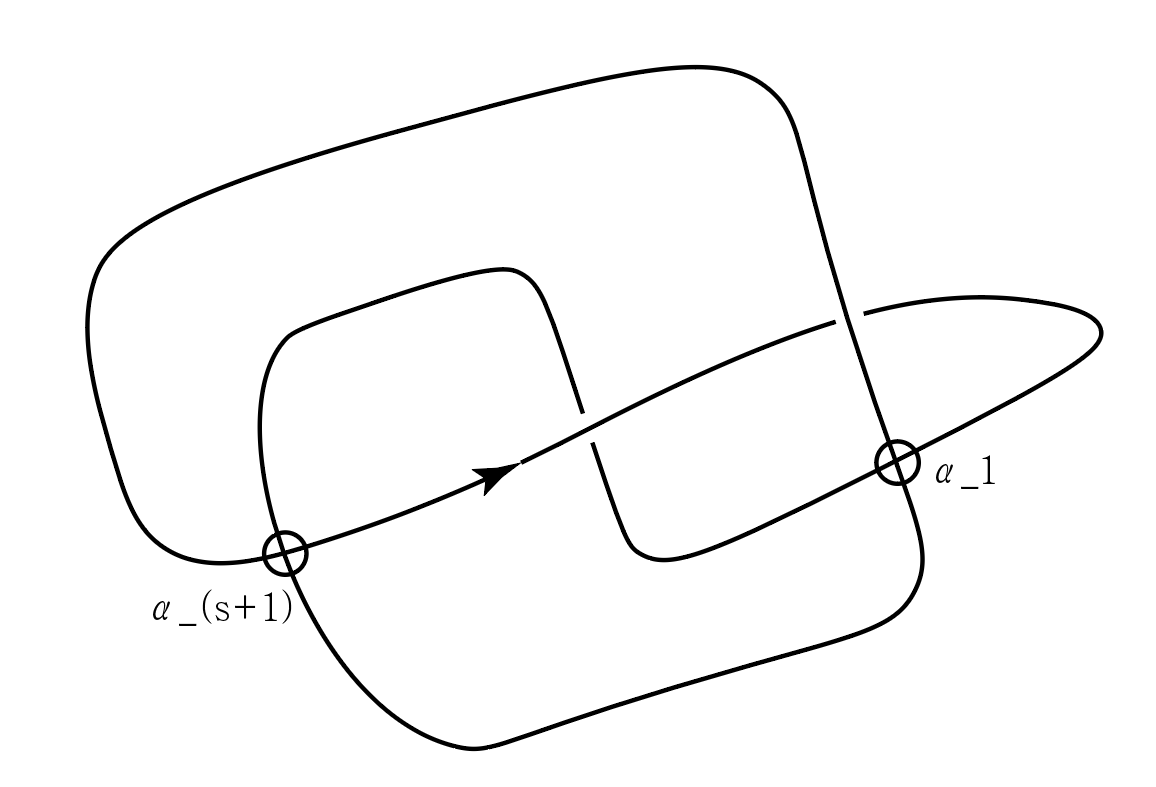}
    \caption{Diagram of $E$}\label{figure6}
  \end{minipage}
\end{figure}

\section*{Acknowledgements}

I am deeply grateful to Dr. Myeong-Ju Jeong for introducing me to knot theory and helping me get interested in doing research in this field.


\begin{thebibliography}{0}



\bib{1}{article}{
	author={Carter, J. Scott},
	author={Kamada, Seiichi},
	author={Saito, Masahico},
	title={Stable equivalence of knots on surfaces and virtual knot cobordisms},
	journal={J. Knot Theory Ramifications},
	volume={11},
	date={2002},
	number={3},
	pages={311--322}}	

\bib{2}{article}{
	author={Cheng, Zhiyun},
	title={A polynomial invariant of virtual knots},
	journal={Proc. Amer. Math. Soc.},
	volume={142},
	date={2014},
	number={2},
	pages={713--725}}


\bib{4}{article}{
	author={Cheng, Zhiyun},
	title={Multivariate writhe polynomial of multi-virtual knots},
	date={2026},
	eprint={arXiv:2606.22501}}

\bib{5}{article}{
	author={Cheng, Zhiyun},
	author={Gao, Hongzhu},
	title={A polynomial invariant of virtual links},
	journal={J. Knot Theory Ramifications},
	volume={22},
	date={2013},
	number={12},
	pages={1341002 (33 pages)}}

\bib{6}{article}{
AUTHOR = {Cheng, Zhiyun},
author={Fedoseev, Denis A.},
author={Gao, Hongzhu},
author={Manturov, Vassily O.},
author={Xu, Mengjian},
TITLE = {From chord parity to chord index},
JOURNAL = {J. Knot Theory Ramifications},
VOLUME = {29},
YEAR = {2020},
NUMBER = {13},
PAGES = {2043004, 26}}


\bib{7}{article}{
	author={Young Ho Im},
	author={Sera Kim},
	author={Dong Soo Lee},
	title={The parity writhe polynomials for virtual knots and flat virtual knots},
	journal={J. Knot Theory Ramifications},
	volume={22},
	date={2013},
	number={1},
	pages={1250133 (20 pages)}}

\bib{8}{article}{
	author={Jeong, Myeong-Ju},
	title={A multivariable polynomial invariant of virtual knots},
	journal={J. Knot Theory Ramifications},
	volume={34},
	date={2025},
	number={6},
	pages={2550025}}

\bib{9}{article}{
	author={Louis H. Kauffman},
	title={Virtual knot theory},
	journal={Europ. J. Combinatorics},
	volume={20},
	date={1999},
	number={},
	pages={663--691}}

\bib{10}{article}{
	author={Louis H. Kauffman},
	title={An affine index polynomial invariant of virtual knots},
	journal={J. Knot Theory Ramifications},
	volume={22},
	date={2013},
	number={4},
	pages={1340007 (30 pages)}}


\bib{11}{article}{
	author={Louis H. Kauffman},
	title={Multi-virtual knot theory},
	journal={J. Knot Theory Ramifications},
	volume={34},
	date={2025},
	number={14},
	pages={Paper No. 2540002, 78}}

\bib{12}{article}{
AUTHOR = {Kauffman, Louis H.},
author={Mukherjee, Sujoy},
author={Vojt\v echovsk\'y, Petr},
TITLE = {Algebraic invariants of multi-virtual links},
JOURNAL = {J. Algebra},
VOLUME = {698},
YEAR = {2026},
PAGES = {493--532}}

\bib{13}{article}{
	author={Kuperberg, Greg},
	title={What is a virtual link?},
	journal={Algebr. Geom. Topol.},
	volume={3},
	date={2003},
	number={},
	pages={587--591}}

\bib{14}{article}{
	author={S. Satoh},
	author={K. Taniguchi},
	title={The writhes of a virtual knot},
	journal={Fundamenta Mathematicae},
	volume={225},
	date={2014},
	number={1},
	pages={327--342}}



    
\end{thebibliography}
\end{document}